\documentclass[11pt,a4paper]{article}
\usepackage[a4paper,
            bindingoffset=0.2in,
            left=1in,
            right=1in,
            top=1in,
            bottom=1in,
            footskip=.25in]{geometry}
\usepackage{amsmath, amssymb, amsthm}
\usepackage{graphicx}
\usepackage{cite}
\usepackage{hyperref}
\usepackage{mathrsfs}
\usepackage{url}
\usepackage{algorithm}
\usepackage{algorithmic}
\usepackage{epstopdf}
\usepackage{pgfplots}
\pgfplotsset{compat=1.18} 

\hypersetup{
    colorlinks=true,
    linkcolor=blue,
    citecolor=blue,
    urlcolor=blue
}

\newtheorem{theorem}{Theorem}[section]
\newtheorem{lemma}[theorem]{Lemma}

\newtheorem{definition}[theorem]{Definition}
\newtheorem{remark}[theorem]{Remark}

\title{The Unified Separation Condition: \\ 
       A General Constraint Qualification for \\ 
       Smooth and Nonsmooth Optimization}
\author{M. E. Abbasov\thanks{abbasov.majid@gmail.com, m.abbasov@spbu.ru} \\
        St. Petersburg State University, \\ 
        7/9 Universitetskaya nab., St. Petersburg, 199034 Russia}
\date{}

\begin{document}

\maketitle

\begin{abstract}
We introduce the \emph{Unified Separation Condition (USC)} --- a simple and general constraint qualification for finite-dimensional nonlinear programming. The USC requires that the origin be uniformly separated from the subdifferential of the constraint violation function in a neighborhood of the feasible set.

The main result of this paper is two-fold. First, we show that the classical Mangasarian--Fromovitz constraint qualification (MFCQ) for inequalities and the linear independence constraint qualification (LICQ) for equalities imply the USC. Conversely, USC is strictly more general: it remains applicable in nonsmooth settings where classical conditions are not defined, and it may hold even when MFCQ fails. The proof that MFCQ implies USC is based on Gordan's theorem; the implication from LICQ to USC follows from the linear independence of the gradients of the active constraints.

We also introduce a local version of USC and discuss its computational verification via convex quadratic programming. The USC provides a unified framework for constraint qualifications, bridging classical smooth theory and nonsmooth optimization.

\noindent\textbf{Keywords:} Constraint qualifications, LICQ, MFCQ, exact penalty methods, constructive nonsmooth analysis, subdifferentials, Gordan's theorem, nonlinear programming.

\noindent\textbf{MSC codes:} 90C30, 49J52, 90C46, 65K05.
\end{abstract}

\section{Introduction}

Constraint qualifications are fundamental to optimization theory. They ensure that the Karush--Kuhn--Tucker (KKT) conditions are necessary for local optimality and that Lagrange multipliers exist. The earliest systematic study of constraint qualifications dates back to the 1950s, following the seminal work of Kuhn and Tucker \cite{KuhnTucker1951}, who first established necessary optimality conditions for nonlinear programming under differentiability assumptions. Their conditions, however, required some form of regularity to ensure that the Lagrange multipliers were well-defined. This need gave rise to a rich hierarchy of constraint qualifications that have been developed over the subsequent decades.

Among the most widely used constraint qualifications are the linear independence constraint qualification (LICQ) and the Mangasarian--Fromovitz constraint qualification (MFCQ) \cite{Mangasarian1967}. LICQ requires that the gradients of the active constraints are linearly independent, guaranteeing uniqueness of the Lagrange multipliers. MFCQ, introduced by Mangasarian and Fromovitz in 1967, is a weaker condition: it requires that the gradients of the active equality constraints are linearly independent and that there exists a direction which is orthogonal to all these gradients and strictly decreases all active inequality constraints. MFCQ ensures that the set of Lagrange multipliers is nonempty and bounded \cite{Gauvin1977}. The relationship between these conditions has been extensively studied; for a comprehensive overview of the hierarchy of constraint qualifications under differentiability assumptions, see the survey by Giorgi, Jiménez, and Novo \cite{Giorgi2026}. These conditions are treated separately, require differentiability of the active constraints, and have no direct connection to numerical algorithms.

An alternative line of development emerged from the theory of exact penalty functions. In their seminal papers, Eremin \cite{Eremin1967} and Zangwill \cite{Zangwill1967} introduced the notion of exact penalization, showing that a constrained optimization problem could be reformulated as an unconstrained one without loss of precision, provided the penalty parameter is chosen sufficiently large. In this theory, the constraints are incorporated into the objective function through a penalty term multiplied by a positive parameter. For sufficiently large values of this parameter, the minimizers of the penalized problem coincide with those of the original constrained problem. Such penalty functions are called exact. The connection between exact penalty functions and constraint qualifications was further explored by Burke \cite{Burke1991}, who showed how the Eremin--Zangwill exact penalty functions could be used to develop the foundations of constrained optimization theory in finite dimensions in an elementary and straightforward way. Regularity conditions, multiplier rules, second-order optimality conditions, and convex programming were all given interpretations relative to exact penalty functions. Subsequent work by Han and Mangasarian \cite{HanMangasarian1979}, Di Pillo and Grippo \cite{DiPilloGrippo1989}, and Di Pillo and Facchinei \cite{DiPillo1989} further considered exact penalty functions for nonsmooth problems. Coleman and Conn \cite{ColemanConn1982} proposed an algorithm based on an exact penalty function with global convergence properties, while Contaldi, Di Pillo, and Lucidi \cite{ContaldiDiPilloLucidi1993} introduced a continuously differentiable exact penalty function for problems with unbounded feasible sets.

A key result due to Demyanov \cite{Dem2005} gives sufficient conditions for the existence of an exact penalty constant. One of these conditions requires that, in a neighborhood of the feasible set, the constraint violation function has a uniformly negative rate of decrease. This condition is equivalently expressed as the uniform separation of the origin from the subdifferential of the constraint violation function.Demyanov's work builds on the broader framework of constructive nonsmooth analysis, which he developed extensively with Rubinov \cite{DemRub1995}. In this framework, subdifferentials are defined through directional derivatives, providing a natural language for describing the local behavior of nonsmooth functions. This framework was further extended through the theory of exhausters and coexhausters, which provide more flexible representations of directional derivatives. Abbasov and Demyanov \cite{DemyanovAbbasov_2010} derived extremum conditions for nonsmooth functions in terms of exhausters and coexhausters, including second-order approximations. Abbasov \cite{Abbasov2015} introduced generalized exhausters and established optimality conditions under weaker assumptions, while Abbasov \cite{Abbasov2017} compared quasidifferentials and exhausters, showing that the latter are preferable even for quasidifferentiable functions in the context of nonsmooth optimization algorithms. 

The theory of exact penalization in nonsmooth settings was further advanced by Demyanov, di Pillo, and Facchinei \cite{DGP1998}, who analyzed exact penalty functions using Dini and Hadamard conditional derivatives and established weak conditions guaranteeing equivalence of stationary points. Demyanov, Giannessi, and Karelin \cite{DGK1998} demonstrated the applicability of exact penalty functions to optimal control problems, while Demyanov and Tamasyan \cite{DT2011, DemTam2014} extended the approach to isoperimetric problems and parametric moving boundary variational problems.

In a separate line of research, significant progress has been made in developing constraint qualifications tailored for specific classes of nonsmooth problems. Notably, Dolgopolik \cite{Dolgopolik2019} proposed a new constraint qualification for quasidifferentiable programming. A key novelty of his approach is that the resulting optimality conditions are formulated using specific elements chosen from the quasidifferential, rather than the quasidifferential as a whole.The present paper is also related to the general theory of variational analysis and generalized differentiation developed by Mordukhovich \cite{Mordukhovich2006I, Mordukhovich2006II}, which provides a comprehensive framework for nonsmooth optimization and stability analysis, as well as to the classical nonsmooth analysis of Clarke \cite{Clarke1983}. In the context of optimal control, the present author \cite{Abbasov2026} applied USC to derive transversality conditions and Pontryagin's maximum principle, demonstrating the power of this approach in infinite-dimensional settings. The USC framework also connects naturally to recent advances in subdifferential calculus \cite{HantouteKrugerLopez2026} and duality theory for convex infinite optimization \cite{GobernaVolle2022}.

The present paper identifies Demyanov's technical condition as a general constraint qualification for finite-dimensional nonlinear programming. We call it the \emph{Unified Separation Condition (USC)}. We prove that MFCQ for inequalities and LICQ for equalities are special cases of USC, and demonstrate that USC naturally extends to nonsmooth problems where classical conditions are not defined. Unlike classical constraint qualifications, USC does not require differentiability of the constraint functions and can be monitored computationally during optimization. A local version of USC is also introduced, making it practical for verifying optimality conditions and analyzing numerical algorithms.

Thus, the USC provides a unified framework that bridges classical smooth constraint qualifications and nonsmooth optimization, while remaining computationally verifiable.

The paper is organized as follows. Section 2 presents the problem formulation and recalls necessary preliminaries from exact penalty theory and constructive nonsmooth analysis. Section 3 introduces the USC and establishes its connection to Demyanov's condition and exact penalty functions. Section 4 proves that MFCQ and LICQ imply USC for inequality and equality constraints, respectively, and discusses the mixed case. Section 5 introduces a local version of USC. Section 6 provides illustrative examples. Section 7 discusses computational verification. Section 8 concludes the paper.

\section{Problem Formulation and Preliminaries}

\subsection{Problem Statement}

Consider the standard nonlinear programming problem:
\begin{equation}
    \min_{x \in \mathbb{R}^n} f(x)
    \label{eq:NLP}
\end{equation}
subject to
\begin{equation}
    g_i(x) \le 0, \quad i \in I = \{1, \dots, m\},
    \label{eq:ineq_constraints}
\end{equation}
where $f$ and $g_i$ are functions from $\mathbb{R}^n$ to $\mathbb{R}$. We denote the feasible set by
\begin{equation}
    \Omega = \{ x \in \mathbb{R}^n \mid g_i(x) \le 0, \; i \in I \}.
\end{equation}

For simplicity of exposition, we focus initially on inequality constraints. Equality constraints and mixed constraints will be discussed in Section~\ref{sec:mixed}. The functions $f$ and $g_i$ are assumed to be locally Lipschitz (a standard assumption in nonsmooth analysis). In the smooth case, they are assumed to be continuously differentiable.

\subsection{Exact Penalty Functions}

The theory of exact penalty functions \cite{Eremin1967, Zangwill1967, Dem2005} provides a powerful framework for converting constrained optimization problems into unconstrained ones. Define the constraint violation function
\begin{equation}
    \phi(x) = \max\{0, g_1(x), \dots, g_m(x)\}.
    \label{eq:phi_def}
\end{equation}

This function has the following properties:
\begin{itemize}
    \item $\phi(x) \ge 0$ for all $x \in \mathbb{R}^n$.
    \item $\phi(x) = 0$ if and only if $x \in \Omega$.
    \item $\phi$ is locally Lipschitz if all $g_i$ are locally Lipschitz.
    \item $\phi$ is a nonsmooth function even when the $g_i$ are smooth.
\end{itemize}

The exact penalty function is defined as
\begin{equation}
    F_\lambda(x) = f(x) + \lambda \phi(x),
    \label{eq:penalty}
\end{equation}
where $\lambda \ge 0$ is the penalty parameter.

\begin{definition}[Exact Penalty Constant]
\label{def:exact_penalty}
A number $\lambda^* \ge 0$ is called an \emph{exact penalty constant} for problem \eqref{eq:NLP}--\eqref{eq:ineq_constraints} if for all $\lambda > \lambda^*$,
\begin{equation}
    \inf_{x \in \mathbb{R}^n} F_\lambda(x) = \inf_{x \in \Omega} f(x),
\end{equation}
and any local minimizer of $F_\lambda$ is a local minimizer of the original problem.
\end{definition}

\begin{figure}[h]
  \centering
  \begin{minipage}{0.45\textwidth}
    \centering
    \includegraphics[width=0.95\textwidth]{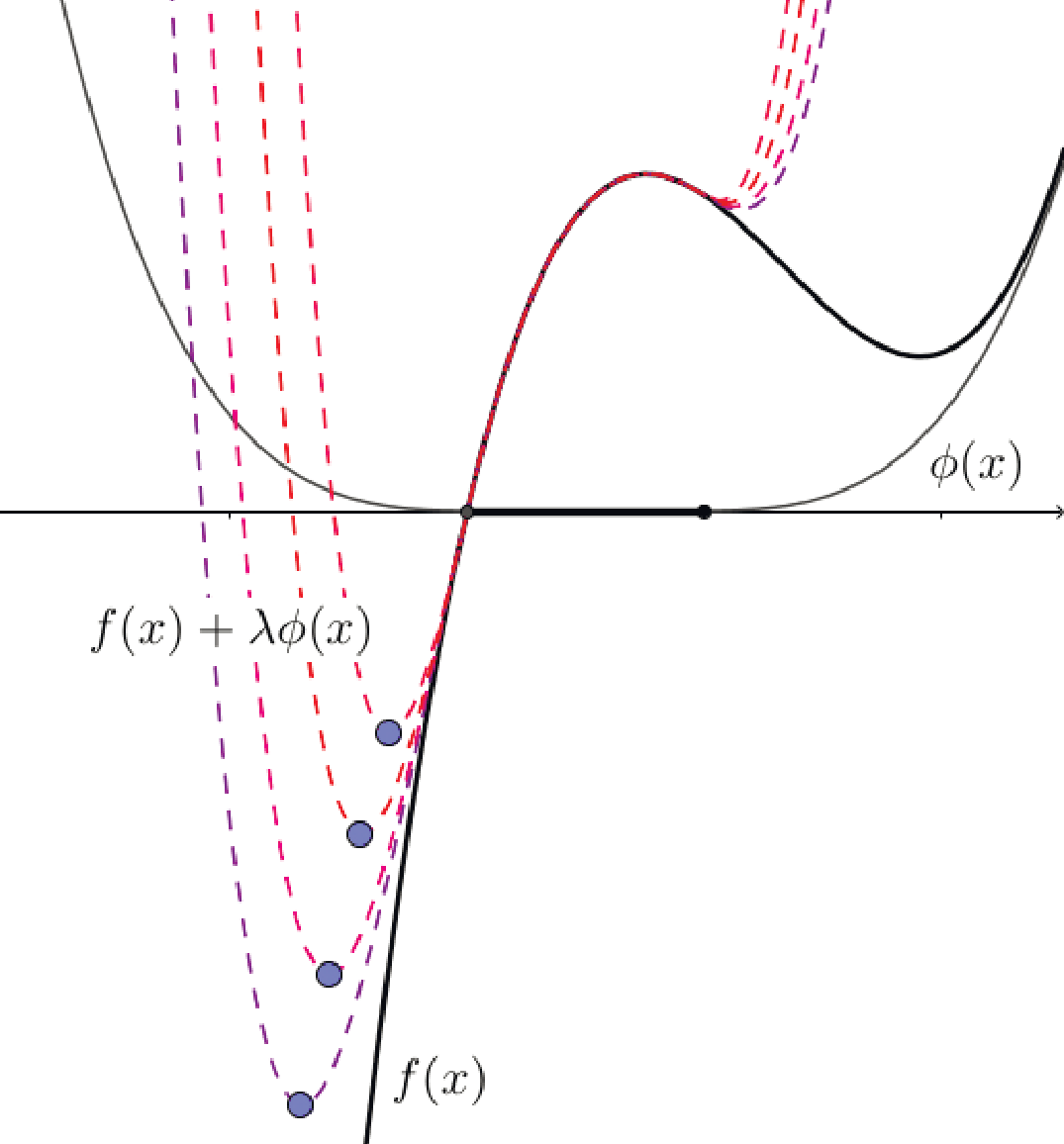}
    \small Smooth penalty
  \end{minipage}\hfill
  \begin{minipage}{0.45\textwidth}
    \centering
    \includegraphics[width=0.95\textwidth]{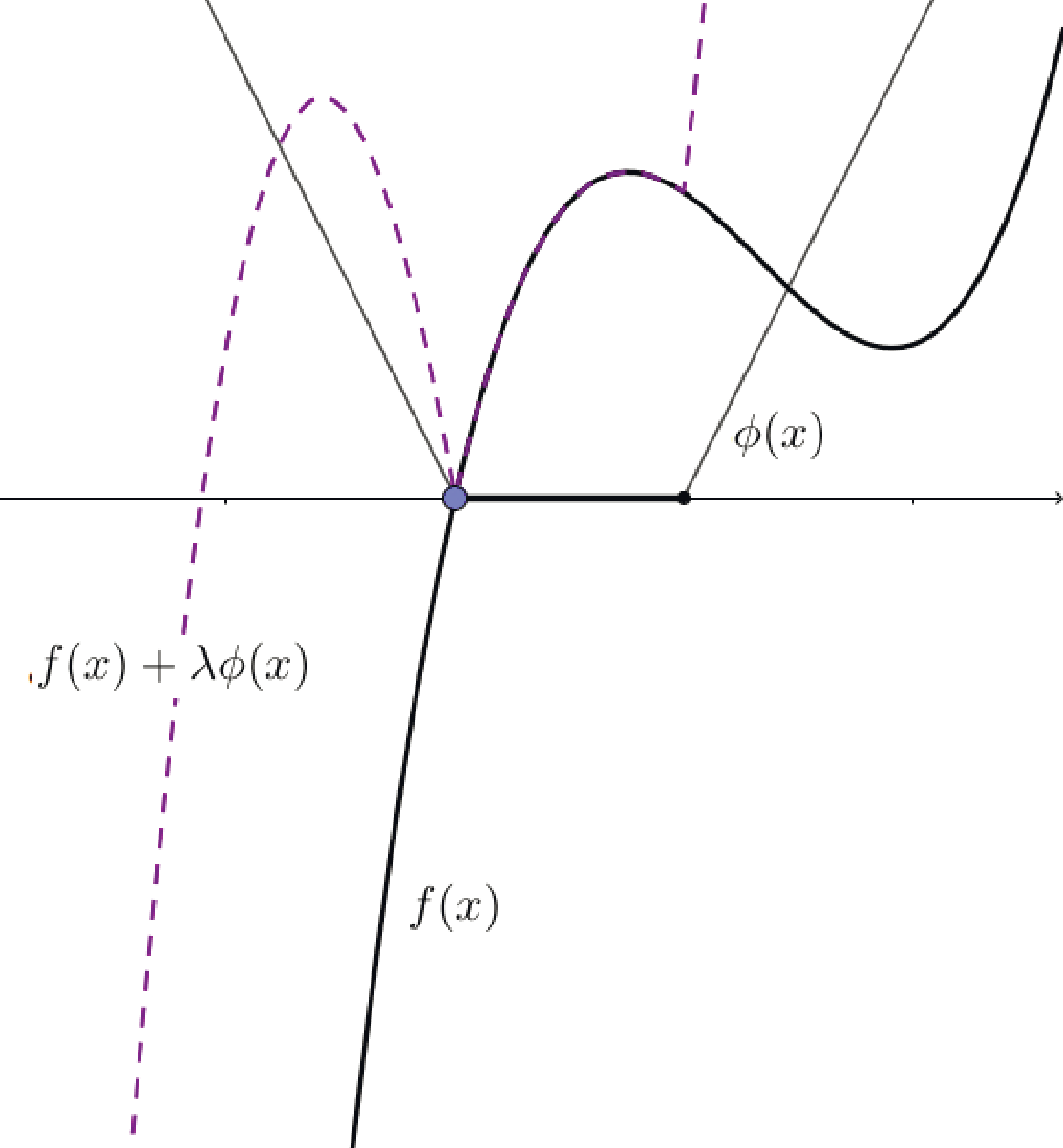}
    \small Nonsmooth exact penalty
  \end{minipage}
  \caption{Classical smooth penalties flatten at the feasible set $\Omega$, while nonsmooth exact penalties retain a nonzero first-order slope. This kink is the geometric reason why exact feasibility can be obtained for a finite penalty parameter.}
  \label{fig:smooth-vs-exact-penalty}
\end{figure}

The fundamental mechanism behind exactness is simple but important. The decisive property is not merely that $\phi$ is small near $\Omega$, but that it has a nonzero first-order rate of change with respect to infeasibility. A clean model case is a single inequality constraint
\[
\Omega = \{x \in \mathbb{R}^n \mid g(x) \le 0\},
\]
where $g$ is continuously differentiable in a neighborhood of the boundary $\{g=0\}$ and is nondegenerate there, say $\|\nabla g(x)\| \ge c > 0$. Then the nonsmooth penalty
\[
\phi(x) = \max\{0, g(x)\}
\]
has a nonzero first-order slope when approaching the feasible set from the infeasible side $g(x) > 0$. This is the relevant exact-penalty mechanism; by contrast, a smooth quadratic violation penalty such as $\max\{0, g(x)\}^2$ has zero first derivative at the boundary. This geometric difference is illustrated in Fig.~\ref{fig:smooth-vs-exact-penalty}. Consequently, a finite multiplier $\lambda$ can make the nonsmooth penalty dominate the possible first-order decrease of $f$ outside $\Omega$. For smooth quadratic penalties, exact feasibility is typically recovered only asymptotically as $\lambda \to \infty$; increasing $\lambda$ also makes the penalized problem increasingly ill-conditioned, which is one of the main numerical drawbacks of classical smooth penalty methods.

A convenient way to express this nonzero-slope property is through the lower derivative
\begin{equation}
\phi^\downarrow(x) = \liminf_{y \to x} \frac{\phi(y) - \phi(x)}{\|y - x\|}.
\end{equation}
When $\phi^\downarrow(x) \le -a < 0$ uniformly in a neighborhood of the feasible set, the penalty function has a guaranteed direction of descent at a rate bounded away from zero. This allows a finite penalty parameter to dominate the objective function $f$ and thus enforce exact feasibility.

\begin{theorem}[Demyanov's Exact Penalty Theorem]
\label{thm:demyanov}
Let $\Omega = \{x \in X \mid \phi(x) = 0\}$, where $\phi(x) \ge 0$. Suppose that:
\begin{enumerate}
    \item $\displaystyle\inf_{x \in X} f(x) > -\infty$.
    \item There exists $\lambda_0 < \infty$ such that for each $\lambda \ge \lambda_0$, there is $x_\lambda \in X$ attaining the infimum of $F_\lambda(x) = f(x) + \lambda \phi(x)$.
    \item There exist $\delta > 0$ and $a > 0$ such that
    \begin{equation}
        \phi^\downarrow(x)\le -a < 0 \quad \forall x \in \Omega_\delta \setminus \Omega,
        \label{eq:demyanov_condition}
    \end{equation}
    where $\Omega_\delta = \{x \in X \mid \phi(x) < \delta\}$.
    \item $f$ is Lipschitz on $\Omega_\delta \setminus \Omega$.
\end{enumerate}
Then there exists $\lambda^* \ge \lambda_0$ such that for all $\lambda > \lambda^*$, $F_\lambda$ is an exact penalty function.
\end{theorem}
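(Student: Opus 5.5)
The plan is to show that for $\lambda$ large the global minimizers $x_\lambda$ from assumption 2 must lie in $\Omega$. From this the identity $\inf_X F_\lambda = \inf_\Omega f$ follows at once, because $F_\lambda = f$ on $\Omega$ and $F_\lambda \ge f$ everywhere. Let $L$ be the Lipschitz constant of $f$ on $\Omega_\delta\setminus\Omega$ and $f^* = \inf_X f > -\infty$. The candidate constant is $\lambda^* = \max\{\lambda_0, L/a, \lambda_1\}$, where $\lambda_1$ is chosen in the first step below.

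\textbf{Step 1: minimizers enter $\Omega_\delta$.} Fix any $\bar x \in \Omega$; we may assume $\Omega \neq \emptyset$, since otherwise the statement is vacuous. For $\lambda \ge \lambda_0$ we have
\[
f^* + \lambda\phi(x_\lambda) \le F_\lambda(x_\lambda) \le F_\lambda(\bar x) = f(\bar x),
\]
so $\phi(x_\lambda) \le (f(\bar x)-f^*)/\lambda$. Choosing $\lambda_1$ with $(f(\bar x)-f^*)/\lambda_1 < \delta$ places $x_\lambda$ in $\Omega_\delta$ for every $\lambda \ge \lambda_1$.

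\textbf{Step 2: minimizers lie in $\Omega$.} Suppose $\lambda > \lambda^*$ and $x_\lambda \in \Omega_\delta\setminus\Omega$. Choose $\varepsilon>0$ with $\lambda(a-\varepsilon) > L$. By \eqref{eq:demyanov_condition} and the definition of $\phi^\downarrow$, there are points $y$ arbitrarily close to $x_\lambda$ with
\[
\phi(y) - \phi(x_\lambda) \le -(a-\varepsilon)\|y-x_\lambda\|, \qquad y \neq x_\lambda.
\]
For $y$ close enough, $y$ lies in the open set $\{0<\phi<\delta\}$ by continuity of $\phi$. The Lipschitz estimate then applies to the pair $x_\lambda, y$, which gives
\[
F_\lambda(y) - F_\lambda(x_\lambda) \le \bigl(L - \lambda(a-\varepsilon)\bigr)\|y-x_\lambda\| < 0.
\]
This contradicts global minimality, so $x_\lambda \in \Omega$ and $\inf_X F_\lambda = F_\lambda(x_\lambda) = f(x_\lambda) \ge \inf_\Omega f$.

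The main obstacle is the local-minimizer clause of Definition~\ref{def:exact_penalty}. Let $z$ be a local minimizer of $F_\lambda$. If $z \in \Omega_\delta\setminus\Omega$, the argument of Step 2 applies verbatim, since it only compared $F_\lambda$ at nearby points, and so excludes this case. If $z \in \Omega$, local minimality of $F_\lambda$ restricted to $\Omega$, where it equals $f$, yields local minimality for the constrained problem. The case $\phi(z) \ge \delta$ cannot be ruled out by these hypotheses, as local minimizers far from $\Omega$ are possible. I would therefore interpret the statement, as in Demyanov's original formulation, either as concerning global minimizers or as requiring local minimizers lying in $\Omega_\delta$. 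I would state that restriction explicitly rather than try to prove more.

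A secondary technical point is the Lipschitz assumption. It is only on $\Omega_\delta\setminus\Omega$, so one needs both $x_\lambda$ and $y$ in that set, which the continuity of $\phi$ (true in our setting since the $g_i$ are locally Lipschitz) guarantees. If continuity is not assumed, I would instead use $\liminf$ sequences $y_k \to x_\lambda$ and restrict to those with $\phi(y_k) < \phi(x_\lambda) < \delta$ and $\phi(y_k)>0$. The last condition holds eventually: the descent inequality forces $\phi(y_k) \ge \phi(x_\lambda) - C\|y_k - x_\lambda\|$ only if $\phi$ is lower semicontinuous, so some such regularity of $\phi$ is genuinely needed and I would assume it.
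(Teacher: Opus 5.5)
The paper gives no proof of this theorem; it states it and attributes it to Demyanov. Your argument is the standard one for this result. The bound $\phi(x_\lambda)\le (f(\bar x)-f^*)/\lambda$ drives global minimizers into $\Omega_\delta$. Once $\lambda>L/a$, the uniform descent rate of $\phi$ dominates the Lipschitz rate of $f$, so no minimizer of $F_\lambda$ can lie in $\Omega_\delta\setminus\Omega$. This proves the identity $\inf_X F_\lambda=\inf_\Omega f$, the global notion of exactness. The argument needs continuity of $\phi$, which holds in the paper's locally Lipschitz setting.

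You were right not to claim the local-minimizer clause of Definition~\ref{def:exact_penalty} in full. Under the stated hypotheses that clause is false, so the statement as literally written cannot be proved. Take $X=\mathbb{R}$, $f\equiv 0$, $\phi(x)=\min\{|x|,\,2+|x-3|\}$ and $\delta=a=1$. Then $\Omega=\{0\}$ and $\Omega_\delta=(-1,1)$, where $\phi=|x|$, so hypotheses 1--4 all hold. Yet $z=3$ is a strict local minimizer of $F_\lambda=\lambda\phi$ for every $\lambda>0$, and $z\notin\Omega$. Your Step 2 delivers exactly the restricted claim: global minimizers, or local minimizers lying in $\Omega_\delta$.

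Two small corrections.
\begin{enumerate}
\item The case $\Omega=\emptyset$ is not vacuous. Take $\phi\equiv 1$, $\delta=1/2$, $f\equiv 0$: all hypotheses hold (3 and 4 vacuously), but $\inf_X F_\lambda=\lambda\neq+\infty=\inf_\Omega f$. So $\Omega\neq\emptyset$ is a genuine extra assumption, not a harmless reduction.
\item In Step 2, the bound $\phi(y)<\delta$ comes for free from the descent inequality. Regularity is needed only to ensure $\phi(y)>0$, and lower semicontinuity of $\phi$ at $x_\lambda$ suffices. Your last sentence instead suggests a bound of the form $\phi(y_k)\ge\phi(x_\lambda)-C\|y_k-x_\lambda\|$, which is not what is required.
\end{enumerate}
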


\begin{theorem}[Equivalence of Local Minima]
\label{thm:local_equivalence}
Let the conditions of Theorem \ref{thm:demyanov} hold, and let $x^* \in \Omega$ be a local minimizer of $f$ on $\Omega$. Then there exists $\lambda^* < \infty$ such that for all $\lambda > \lambda^*$, in a sufficiently small neighborhood $B$ of $x^*$:
\begin{enumerate}
    \item[(i)] Any local minimizer of $F_\lambda$ on $B$ is a local minimizer of $f$ on $\Omega$.
    \item[(ii)] The point $x^*$ is a local minimizer of $F_\lambda$ on $B$.
\end{enumerate}
\end{theorem}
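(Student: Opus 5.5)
The plan is to localize the argument behind Theorem~\ref{thm:demyanov}. The central tool is a local error bound derived from condition~\eqref{eq:demyanov_condition}: there is a neighborhood $U$ of $x^*$ such that
\[
\operatorname{dist}(x,\Omega) \le \frac{1}{a}\,\phi(x) \qquad \text{for all } x \in U .
\]
Given this bound, the rest is a comparison of $F_\lambda$ with $f$ at a nearby feasible point.

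\emph{Step 1 (error bound).} Fix $x \in \Omega_\delta\setminus\Omega$ close to $x^*$. Because $\phi^\downarrow \le -a$ at every infeasible point of $\Omega_\delta$, we can always find points $y$ arbitrarily close to the current point with $\phi(y) - \phi(\cdot) \le -(a-\varepsilon)\|y-\cdot\|$. I would build a descent trajectory from $x$ with the Ekeland variational principle. Apply it to the lower semicontinuous function $\phi$ on a closed ball around $x$, with parameter $a-\varepsilon$. It yields a point $z$ with $\phi(z) \le \phi(x) - (a-\varepsilon)\|z-x\|$, and such that $z$ is a strict minimizer of $\phi(\cdot) + (a-\varepsilon)\|\cdot - z\|$.

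If $\phi(z) > 0$, then $\phi^\downarrow(z) \le -a$ contradicts this minimality. Hence $\phi(z) = 0$ and $\|z-x\| \le \phi(x)/(a-\varepsilon)$. Letting $\varepsilon \to 0$ gives the bound.

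Care is needed so that $z$ stays in $\Omega_\delta$ and inside the region where condition~4 applies. This holds automatically because $\phi(z) \le \phi(x) < \delta$. Since $\phi$ is continuous, taking $x$ close to $x^*$ also keeps $z$ close to $x^*$.

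\emph{Step 2 (part (ii)).} Choose $B = B(x^*,r)$ with $r$ small, so that $f(x^*) \le f(z)$ for every $z \in \Omega \cap B(x^*,3r)$. Take $x \in B$ and let $z \in \Omega$ be the point from Step~1, so that $\|z-x\| \le \phi(x)/a$ and $\|z - x^*\| \le 3r$ once $r$ is small. Let $L$ be the Lipschitz constant of $f$ near $x^*$. Then
\[
F_\lambda(x) = f(x) + \lambda\phi(x) \ge f(z) - L\|x-z\| + \lambda\phi(x) \ge f(x^*) + \Bigl(\lambda - \frac{L}{a}\Bigr)\phi(x).
\]
So for $\lambda > \lambda^* := L/a$ we get $F_\lambda(x) \ge f(x^*) = F_\lambda(x^*)$, which is (ii).

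One technical point: condition~4 gives Lipschitz continuity only on $\Omega_\delta\setminus\Omega$. I would use the standing assumption that $f$ is locally Lipschitz, or extend by continuity to the closure, to obtain a Lipschitz bound along the segment from $x$ to $z$.

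\emph{Step 3 (part (i)).} Let $\bar x \in B$ be a local minimizer of $F_\lambda$ with $\lambda > L/a$.

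First I show $\bar x$ is feasible. Suppose $\phi(\bar x) > 0$. Then $\phi^\downarrow(\bar x) \le -a$ gives points $y_k \to \bar x$ with $\phi(y_k) - \phi(\bar x) \le -(a-\varepsilon)\|y_k-\bar x\|$. Together with $f(y_k) - f(\bar x) \le L\|y_k - \bar x\|$, this yields
\[
F_\lambda(y_k) - F_\lambda(\bar x) \le \bigl(L - \lambda(a-\varepsilon)\bigr)\|y_k-\bar x\| < 0,
\]
contradicting local minimality. Hence $\bar x \in \Omega$. Since $F_\lambda = f$ on $\Omega$ and $F_\lambda \ge f$ everywhere, $\bar x$ minimizes $f$ locally on $\Omega$.

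The main obstacle is Step~1. Turning the pointwise lower-derivative condition into a quantitative distance bound requires Ekeland's principle, together with the bookkeeping that keeps all constructed points inside $\Omega_\delta$ and inside the region where $f$ is Lipschitz. I would settle that first.
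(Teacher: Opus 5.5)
The paper does not prove this theorem. It is quoted from Demyanov's theory \cite{Dem2005} and is used only in the sketch after Theorem~\ref{thm:exact_penalty}. Your argument is a correct, self-contained proof along the standard lines. The estimate $\operatorname{dist}(x,\Omega)\le\phi(x)/a$ is, as far as I recall, also the core of Demyanov's own treatment, where it comes from a hand-built descent sequence; Ekeland's principle packages that construction neatly. Part (i) is then the first-order contradiction $F_\lambda^\downarrow(\bar x)\le L-\lambda a<0$ at an infeasible point, and part (ii) is the Lipschitz comparison with a nearby feasible point.

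What your version buys is full locality. Conditions 1 and 2 of Theorem~\ref{thm:demyanov} are never used, and the slope condition is invoked only near $x^*$. So the same argument proves Theorem~\ref{thm:exact_penalty} directly from the USC via Lemma~\ref{lem:lower_derivative}. It avoids the paper's detour of checking conditions 1--2 on an auxiliary compact set.

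A few details need tightening.
\begin{enumerate}
    \item In Step~2, take $z$ to be a nearest point of $\Omega$ to $x$, which exists since $\Omega$ is closed. Then $\|z-x\|=\operatorname{dist}(x,\Omega)\le\min\{\phi(x)/a,\ \|x-x^*\|\}$, so $\|z-x^*\|\le 2r$ automatically and the threshold is exactly $L/a$. For the Ekeland point itself you only know $\|z-x^*\|\le r+\phi(x)/(a-\varepsilon)$. That bound need not be $\le 3r$ when $\phi$ is much steeper than $a$ in some directions, so ``once $r$ is small'' does not justify it.
    \item Apply Ekeland on all of $\mathbb{R}^n$, or on a ball of radius exceeding $\phi(x)/(a-\varepsilon)$. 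Otherwise $z$ could lie on the boundary of the ball, and its minimality would no longer contradict $\phi^\downarrow(z)\le -a$.
    \item In Step~3, $B$ must be open and small enough that $\phi<\delta$ on $B$. This ensures condition~3 applies at $\bar x$ and the points $y_k$ eventually lie in $B$.
    \item Your caveat about condition~4 is essential, not cosmetic: with Lipschitz continuity only on $\Omega_\delta\setminus\Omega$, part (ii) is false. Take $n=1$, $\phi(x)=\max\{0,x\}$, and $f=-10$ on $(-\infty,-1]$, $f=0$ on $(-1,0]$, $f=-1$ on $(0,\infty)$. All four conditions of Theorem~\ref{thm:demyanov} hold with $a=1$, and $x^*=0$ is a local minimizer of $f$ on $\Omega$. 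Yet $F_\lambda(x)=-1+\lambda x<F_\lambda(0)$ for $0<x<1/\lambda$.
\end{enumerate}
So relying on the paper's standing assumption that $f$ is locally Lipschitz (continuity at feasible points near $x^*$ is what matters) is genuinely required. With it, your proof goes through.
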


This theorem establishes the equivalence between local minima of the original constrained problem and those of the penalized unconstrained problem in a neighborhood of the solution. The key condition enabling this equivalence is (3): in a neighborhood of the feasible set, the constraint violation function must have a uniformly negative rate of decrease. Demyanov notes in \cite{Dem2005} that this condition is equivalent to requiring that the subdifferential of $\phi$ is uniformly bounded away from zero.

\subsection{Subdifferential Calculus}

We briefly recall some concepts from constructive nonsmooth analysis \cite{Dem2005, Demyanov1995}. For a locally Lipschitz function $\phi: \mathbb{R}^n \to \mathbb{R}$, the subdifferential at $x$ in the sense of Demyanov is defined through the directional derivative:
\begin{equation}
    \phi'(x; d) = \lim_{\alpha \downarrow 0} \frac{\phi(x + \alpha d) - \phi(x)}{\alpha}.
\end{equation}
If the directional derivative exists and has the form
\begin{equation}
    \phi'(x; d) = \max_{v \in \partial \phi(x)} \langle v, d \rangle,
\end{equation}
then the set $\partial \phi(x)$ is called the subdifferential of $\phi$ at $x$. For the functions considered in this paper, which are maxima of smooth functions, this subdifferential is always well-defined and is a convex compact set.

For the constraint violation function $\phi(x) = \max\{0, g_1(x), \dots, g_m(x)\}$, when $x \notin \Omega$ (i.e., $\phi(x) > 0$), the subdifferential is given by
\begin{equation}
    \partial \phi(x) = \operatorname{conv}\{ \nabla g_i(x) \mid i \in R(x) \},
    \label{eq:subdiff_phi}
\end{equation}
where
\begin{equation}
    R(x) = \{ i \in I \mid g_i(x) = \phi(x) \}
\end{equation}
is the set of indices of active constraints.

\begin{remark}
When $x \in \Omega$ (i.e., $\phi(x) = 0$), the subdifferential includes the zero term:
\begin{equation}
    \partial \phi(x) = \operatorname{conv}\{ 0, \nabla g_i(x) \mid i \in I_0(x) \},
\end{equation}
where $I_0(x) = \{ i \in I \mid g_i(x) = 0 \}$.
\end{remark}

\section{The Unified Separation Condition}

\subsection{From Demyanov's Condition to USC}

We now establish a key link between Demyanov's condition \eqref{eq:demyanov_condition} and the subdifferential. The following result from nonsmooth analysis plays a central role in our approach: it allows us to reformulate the purely analytic condition $\phi^\downarrow(x) \le -a < 0$ in geometric terms, as the uniform separation of the origin from the subdifferential. This reformulation is essential for establishing the connection with classical constraint qualifications. For the reader's convenience, we provide a complete proof of this result for the class of functions relevant to our setting.

\begin{lemma}[Lower Derivative via Subdifferential]
\label{lem:lower_derivative}
Let $\phi: \mathbb{R}^n \to \mathbb{R}$ be a locally Lipschitz function that is the maximum of a finite collection of smooth functions. Then for every $x \in \mathbb{R}^n$,
\begin{equation}
    \phi^\downarrow(x) = -\operatorname{dist}(0, \partial \phi(x)).
\end{equation}
\end{lemma}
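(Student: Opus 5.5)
The plan is to reduce the lower derivative to a finite-dimensional minimax over the compact convex set $\partial\phi(x)$, and then evaluate that minimax using the projection of the origin onto $\partial\phi(x)$.

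\emph{Step 1: reduce the liminf to directions.} Write $\phi=\max_{j\in J}h_j$ with $h_j$ smooth, and let $J(x)$ be the set of indices active at $x$. Then $\phi'(x;d)=\max_{j\in J(x)}\langle\nabla h_j(x),d\rangle=\max_{v\in\partial\phi(x)}\langle v,d\rangle$, consistently with \eqref{eq:subdiff_phi}. I will show that this expansion is uniform in direction: $\phi(x+td)=\phi(x)+t\phi'(x;d)+o(t)$ uniformly for $\|d\|=1$. This follows from Taylor's formula applied to each $h_j$ together with continuity, which makes the inactive $h_j$ strictly smaller near $x$. Writing $y=x+t d$ with $t=\|y-x\|$ and $\|d\|=1$, the uniform expansion gives
\[
\phi^\downarrow(x)=\min_{\|d\|=1}\phi'(x;d)=\min_{\|d\|=1}\max_{v\in\partial\phi(x)}\langle v,d\rangle .
\]
The minimum is attained because $\phi'(x;\cdot)$ is continuous, indeed Lipschitz, on the unit sphere.

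\emph{Step 2: evaluate the minimax.} Let $v_0$ be the projection of $0$ onto $C=\partial\phi(x)$, and suppose $v_0\neq0$. Take $d_0=-v_0/\|v_0\|$. The projection inequality $\langle v-v_0,v_0\rangle\ge0$ for all $v\in C$ gives $\max_{v\in C}\langle v,d_0\rangle=-\|v_0\|$. Conversely, for any unit $d$ we have $\max_{v\in C}\langle v,d\rangle\ge\langle v_0,d\rangle\ge-\|v_0\|$ by Cauchy--Schwarz. Hence the minimax equals $-\operatorname{dist}(0,C)$, which gives the stated identity whenever $0\notin\partial\phi(x)$.

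\emph{Step 3: the case $0\in\partial\phi(x)$.} I expect this to be the main obstacle. Here the right-hand side is $0$. If $0$ lies on the boundary of $C$, a supporting hyperplane at $0$ gives a unit $d$ with $\max_{v\in C}\langle v,d\rangle=0$, so the identity holds. If $0$ lies in the interior of $C$, however, the minimax is strictly positive. For instance, $\phi(x)=|x|=\max\{x,-x\}$ at $x=0$ has $\partial\phi(0)=[-1,1]$ but $\phi^\downarrow(0)=1$.

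So the literal identity can only be established where $0\notin\operatorname{int}\partial\phi(x)$. First I would try to show that interior points cannot occur in the situation where the lemma is applied. That situation is $x\in\Omega_\delta\setminus\Omega$, where \eqref{eq:subdiff_phi} gives $\partial\phi(x)=\operatorname{conv}\{\nabla g_i(x)\mid i\in R(x)\}$. This hope also fails in general (take $g_1=x_1+1$, $g_2=-x_1+1$ at $x=0$). So the honest conclusion of the plan is the following:
\begin{itemize}
\item Steps 1--2 prove the equality exactly whenever $0\notin\operatorname{int}\partial\phi(x)$, and in particular whenever $0\notin\partial\phi(x)$.
\item When $0\in\operatorname{int}\partial\phi(x)$, the equality as worded fails, so the statement needs that hypothesis (or the convention of reading both sides only through the sign condition $\phi^\downarrow(x)\le -a<0$, which is all that is needed for \eqref{eq:demyanov_condition}).
\end{itemize}
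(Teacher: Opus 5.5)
Your analysis is correct and sharper than the paper's own argument. The lemma as worded is false. Your example $\phi(x)=|x|$ at $x=0$ is a valid counterexample, since $\phi^\downarrow(0)=1$ while $-\operatorname{dist}(0,[-1,1])=0$, and it even lies in the paper's class ($\max\{0,x,-x\}=|x|$ with $0\in\Omega$).

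The paper's proof shares your Step~1, the reduction to $\inf_{\|d\|=1}\phi'(x;d)$. Your uniform first-order expansion supplies the uniformity in $d$ that the paper asserts merely ``by the definition of the directional derivative''. The paper then swaps $\min_{\|d\|=1}$ and $\max_{v\in\partial\phi(x)}$ by citing the minimax theorem on ``the compact unit sphere''. That is exactly where it breaks: the sphere is not convex. Only the weak inequality $\phi^\downarrow(x)\ge-\operatorname{dist}(0,\partial\phi(x))$ survives, and it is strict precisely when $0\in\operatorname{int}\partial\phi(x)$.

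Your projection argument replaces the minimax theorem with an elementary computation that also exhibits the optimal direction $d_0=-v_0/\|v_0\|$. Together with the supporting-hyperplane case, it yields the correct statement, which can be written for every $x$ as
\[
\min\{0,\phi^\downarrow(x)\}=-\operatorname{dist}\bigl(0,\partial\phi(x)\bigr).
\]
This is also what the paper's minimax route gives if the sphere is replaced by the convex unit ball, using positive homogeneity of $\phi'(x;\cdot)$. This form makes the equivalence $\phi^\downarrow(x)\le -a<0 \iff \operatorname{dist}(0,\partial\phi(x))\ge a$, used afterwards to pass from Demyanov's condition to the USC, immediate. That confirms your last bullet.

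One correction to a side remark: your example $g_1=x_1+1$, $g_2=-x_1+1$ works only for $n=1$. For $n\ge 2$, $\operatorname{conv}\{e_1,-e_1\}$ has empty interior in $\mathbb{R}^n$, so $0$ is a boundary point and the identity holds there ($\phi'(0;e_2)=0$). The example also has $\Omega=\emptyset$, so it does not show failure in the region where the lemma is used. Failure next to a genuine feasible point does occur. Take, in $\mathbb{R}$, the $C^1$ functions $g_1=x^3\sin(1/x)$ and $g_2=x^3\cos(1/x)$, set to $0$ at $0$. At $x_k=1/(\pi/4+2k\pi)\to 0\in\Omega$, $k\ge 1$, both are active and positive with $g_1'(x_k)<0<g_2'(x_k)$. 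Hence $0\in\operatorname{int}\partial\phi(x_k)$ while $\phi^\downarrow(x_k)>0$.
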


\begin{proof}
By definition,
\begin{equation}
    \phi^\downarrow(x) = \liminf_{y \to x} \frac{\phi(y) - \phi(x)}{\|y - x\|}.
\end{equation}
Since $\phi$ is the maximum of smooth functions, the directional derivative exists in every direction $d$ and is given by
\begin{equation}
    \phi'(x; d) = \max_{v \in \partial \phi(x)} \langle v, d \rangle.
\end{equation}

We first prove that
\begin{equation}
    \phi^\downarrow(x) = \inf_{\|d\| = 1} \phi'(x; d).
\end{equation}

\textit{Step 1:} $\phi^\downarrow(x) \ge \inf_{\|d\| = 1} \phi'(x; d)$.

Take any sequence $y_k \to x$, $y_k \neq x$. Set $\alpha_k = \|y_k - x\|$ and $d_k = (y_k - x)/\|y_k - x\|$. Then $\alpha_k \to 0$, $\|d_k\| = 1$, and
\begin{equation}
    \frac{\phi(y_k) - \phi(x)}{\|y_k - x\|} = \frac{\phi(x + \alpha_k d_k) - \phi(x)}{\alpha_k}.
\end{equation}
For any $\varepsilon > 0$ and sufficiently large $k$, by the definition of the directional derivative,
\begin{equation}
    \frac{\phi(x + \alpha_k d_k) - \phi(x)}{\alpha_k} \ge \phi'(x; d_k) - \varepsilon \ge \inf_{\|d\| = 1} \phi'(x; d) - \varepsilon.
\end{equation}
Taking the limit inferior and letting $\varepsilon \to 0$, we obtain
\begin{equation}
    \phi^\downarrow(x) \ge \inf_{\|d\| = 1} \phi'(x; d).
\end{equation}

\textit{Step 2:} $\phi^\downarrow(x) \le \inf_{\|d\| = 1} \phi'(x; d)$.

Let $\{d_k\}$ be a sequence of unit vectors such that $\phi'(x; d_k) \to \inf_{\|d\|=1} \phi'(x; d)$. 
For each $k$, by the definition of the directional derivative, there exists $\alpha_k > 0$ such that
\begin{equation}
    \frac{\phi(x + \alpha_k d_k) - \phi(x)}{\alpha_k} < \phi'(x; d_k) + \frac{1}{k}.
\end{equation}
Without loss of generality, assume that $\alpha_k < \alpha_{k-1}/2$. Then $\alpha_k \to 0$ and, taking $y_k = x + \alpha_k d_k$, we have $y_k \to x$ and
\begin{equation}
    \phi^\downarrow(x) \le \liminf_{k \to \infty} \frac{\phi(y_k) - \phi(x)}{\|y_k - x\|} \le \liminf_{k \to \infty} \left( \phi'(x; d_k) + \frac{1}{k} \right) = \inf_{\|d\|=1} \phi'(x; d).
\end{equation}
Thus the equality is proved.

Now, by the minimax theorem applied to the bilinear function $\langle v, d \rangle$ on the convex compact set $\partial \phi(x)$ and the compact unit sphere,
\begin{equation}
    \min_{\|d\| = 1} \max_{v \in \partial \phi(x)} \langle v, d \rangle = \max_{v \in \partial \phi(x)} \min_{\|d\| = 1} \langle v, d \rangle.
\end{equation}
For any fixed $v$, the minimum over $\|d\| = 1$ is attained at $d = -v/\|v\|$ (if $v \neq 0$), giving
\begin{equation}
    \min_{\|d\| = 1} \langle v, d \rangle = -\|v\|.
\end{equation}
Therefore,
\begin{equation}
    \phi^\downarrow(x) = \max_{v \in \partial \phi(x)} (-\|v\|) = -\min_{v \in \partial \phi(x)} \|v\| = -\operatorname{dist}(0, \partial \phi(x)).
\end{equation}
This completes the proof.
\end{proof}

Thus, Demyanov's condition $\phi^\downarrow(x) \le -a < 0$ is equivalent to the uniform separation of the origin from the subdifferential:
\begin{equation}
    \operatorname{dist}(0, \partial \phi(x)) \ge a > 0 \quad \forall x \in \Omega_\delta \setminus \Omega.
\end{equation}

We formalize this as follows:

\begin{definition}[Unified Separation Condition, USC]
\label{def:usc}
We say that the \emph{Unified Separation Condition} holds at a feasible point $x^* \in \Omega$ if there exist a neighborhood $B$ of $x^*$ and a constant $a > 0$ such that
\begin{equation}
    \operatorname{dist}\bigl(0, \partial \phi(x)\bigr) \ge a > 0
    \quad \forall x \in B \setminus \Omega,
    \label{eq:usc_def}
\end{equation}
where $\operatorname{dist}(0, A) = \inf_{v \in A} \|v\|_2$.
\end{definition}

\begin{remark}
The USC has a simple geometric interpretation: in a neighborhood of the feasible set, the subdifferential of the constraint violation function $\phi$ must be uniformly bounded away from zero. This means that every point outside the feasible set has a direction of descent for $\phi$, and the rate of descent is uniformly bounded below.
\end{remark}

\subsection{USC and Exact Penalty Functions}

The USC is precisely the condition that guarantees the existence of an exact penalty constant. The following theorem is a direct consequence of Demyanov's results \cite{Dem2005} (see also \cite{Abbasov2026}).

\begin{theorem}[Exact Penalty via USC]
\label{thm:exact_penalty}
Suppose that:
\begin{enumerate}
    \item The USC holds at $x^* \in \Omega$.
    \item $f$ is locally Lipschitz in a neighborhood of $x^*$.
\end{enumerate}
Then there exists $\lambda^* < \infty$ such that for all $\lambda > \lambda^*$, $x^*$ is a local minimizer of the penalty function $F_\lambda(x) = f(x) + \lambda \phi(x)$ on $\mathbb{R}^n$. Moreover, any local minimizer of $F_\lambda$ for $\lambda > \lambda^*$ is a local minimizer of the original problem.
\end{theorem}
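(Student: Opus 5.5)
The plan is to follow the structure of Theorem~\ref{thm:local_equivalence}, but to work directly from the local USC rather than from the global hypotheses of Theorem~\ref{thm:demyanov}.

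\textbf{An added hypothesis.} The first assertion can only hold when $x^*$ is itself a local minimizer of $f$ on $\Omega$, which the statement does not say. I will therefore assume this explicitly, as Theorem~\ref{thm:local_equivalence} does. I will also read the second assertion locally: it concerns local minimizers of $F_\lambda$ lying in a suitable neighborhood of $x^*$.

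\textbf{Step 1: an error bound.} The first step is to convert USC into a local error bound of the form
\[
\operatorname{dist}(x,\Omega) \le \phi(x)/a' \quad \text{for all } x \text{ near } x^*,
\]
for any fixed $a' \in (0,a)$. This is the step I expect to be the main obstacle.

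I will use Ekeland's variational principle. Shrink $B$ to a closed ball $\bar B(x^*,2r)$ on which USC holds and $f$ is Lipschitz with constant $L$.

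Take $x \in B(x^*,r)$ with $\phi(x) > 0$, and assume $\phi(x) < a' r$; this holds after shrinking, since $\phi$ is continuous and $\phi(x^*)=0$. Apply Ekeland to the continuous, nonnegative function $\phi$ on the complete set $\bar B(x,r)$ with parameter $a'$. This yields a point $z$ with:
\begin{itemize}
\item $\phi(z) \le \phi(x) - a'\|z-x\|$, so in particular $\|z-x\| \le \phi(x)/a' < r$ and $z$ is interior to $\bar B(x,r)$;
\item $\phi(y) > \phi(z) - a'\|y-z\|$ for all $y \ne z$ in $\bar B(x,r)$.
\end{itemize}
The second property gives $\phi^\downarrow(z) \ge -a' > -a$.

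If $\phi(z) > 0$, then $z \in B\setminus\Omega$. Lemma~\ref{lem:lower_derivative} then gives $\phi^\downarrow(z) = -\operatorname{dist}(0,\partial\phi(z)) \le -a$, which is a contradiction. Hence $z \in \Omega$, and so $\operatorname{dist}(x,\Omega) \le \|z-x\| \le \phi(x)/a'$.

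\textbf{Step 2: $x^*$ is a local minimizer of $F_\lambda$.} Pick a neighborhood $U$ of $x^*$ small enough that:
\begin{itemize}
\item the error bound from Step 1 holds on $U$;
\item $f(p) \ge f(x^*)$ for all feasible $p$ within distance $2\,\mathrm{diam}(U)$ of $x^*$.
\end{itemize}
Set $\lambda^* = L/a'$. For $x \in U$, let $p \in \Omega$ be a nearest feasible point; it stays in the Lipschitz ball because $\|p - x\| \le \|x^* - x\|$. Then
\[
F_\lambda(x) \ge f(p) - L\|x-p\| + \lambda\phi(x) \ge f(x^*) + (\lambda - L/a')\,\phi(x) \ge F_\lambda(x^*).
\]

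\textbf{Step 3: local minimizers of $F_\lambda$ are feasible.} Let $\bar x \in U$ be a local minimizer of $F_\lambda$ with $\lambda > \lambda^*$, and suppose $\phi(\bar x) > 0$. By Lemma~\ref{lem:lower_derivative} and the formula for $\phi'(\bar x;\cdot)$, there is a unit direction $d$ with $\phi'(\bar x;d) = -\operatorname{dist}(0,\partial\phi(\bar x)) \le -a$. Combined with the Lipschitz bound on $f$,
\[
\limsup_{t\downarrow 0}\frac{F_\lambda(\bar x+td)-F_\lambda(\bar x)}{t} \le L - \lambda a < 0,
\]
which contradicts local minimality. Hence $\bar x \in \Omega$. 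Since $F_\lambda = f$ on $\Omega$ and $F_\lambda \le f$ nowhere fails on $\Omega$, $\bar x$ is a local minimizer of $f$ on $\Omega$.

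\textbf{Where the difficulty lies.} The only delicate point is the localization in Step 1: the Ekeland point $z$ must stay inside the USC neighborhood. The bound $\|z-x\| \le \phi(x)/a'$, together with the continuity of $\phi$ at $x^*$, is what guarantees this.
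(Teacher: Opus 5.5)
Your proof is correct, and it takes a genuinely different route from the paper's. The paper does not argue directly. It notes via Lemma~\ref{lem:lower_derivative} that USC is condition (3) of Theorem~\ref{thm:demyanov}, asserts that conditions (1)--(2) hold ``when the result is applied locally'', and then cites Theorems~\ref{thm:demyanov} and~\ref{thm:local_equivalence} as black boxes. In doing so it silently imports from Theorem~\ref{thm:local_equivalence} the hypothesis that $x^*$ is a local minimizer of $f$ on $\Omega$. It also does not address the mismatch between USC, which is a condition near $x^*$ only, and condition (3), which is imposed on all of $\Omega_\delta\setminus\Omega$. You were right to add the local-minimizer hypothesis explicitly; without it the first assertion fails, e.g.\ $f(x)=x$, $g(x)=x$, $x^*=0$. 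You were also right to read the ``moreover'' clause locally. Instead of the black boxes, you use the standard mechanism behind exact penalization. Ekeland's principle converts USC into the local error bound $\operatorname{dist}(x,\Omega)\le\phi(x)/a'$, and Lipschitz estimates then give minimality of $x^*$ for $F_\lambda$ and feasibility of nearby local minimizers. This buys three things: a self-contained argument in which the localization is explicit, an explicit threshold $\lambda^*=L/a'$ for any $a'<a$, and the error bound as a by-product of independent interest. The key localization step is your estimate $\|z-x\|\le\phi(x)/a'<r$, which is precisely what keeps the Ekeland point inside the USC ball. Your argument also uses only the inequality $\phi^\downarrow(z)\le-\operatorname{dist}(0,\partial\phi(z))$ at points with $0\notin\partial\phi(z)$. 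That is the part of Lemma~\ref{lem:lower_derivative} that holds in general. The stated equality fails when $0\in\operatorname{int}\partial\phi(x)$, e.g.\ $\phi(x)=|x|$ at $x=0$, because the minimax step is taken over the nonconvex unit sphere. The paper's route is shorter and ties the result to Demyanov's global exact-penalty theory, but it is only as rigorous as the unstated localization of those theorems. One wording slip: ``$F_\lambda\le f$ nowhere fails on $\Omega$'' is garbled. What you need, and what holds, is simply $f(y)=F_\lambda(y)\ge F_\lambda(\bar x)=f(\bar x)$ for feasible $y$ near $\bar x$.
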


This theorem follows directly from Theorems \ref{thm:demyanov} and \ref{thm:local_equivalence}. Indeed, by Lemma \ref{lem:lower_derivative}, the USC is precisely condition (3) of Theorem \ref{thm:demyanov} reformulated in terms of the subdifferential. Conditions (1) and (2) of Theorem \ref{thm:demyanov} are standard technical assumptions that are satisfied in our setting when the result is applied locally: condition (1) holds because we only need the infimum of $f$ on the compact neighborhood where the USC is verified; condition (2) holds because $F_\lambda$ attains its infimum on compact sets. Alternatively, one can apply the local versions of these theorems directly, as stated in Theorem \ref{thm:local_equivalence}. Applying Theorems \ref{thm:demyanov} and \ref{thm:local_equivalence} yields the result.

Thus, the USC is not merely a technical condition for exact penalization -- it is a fundamental regularity condition that guarantees the equivalence between the constrained and unconstrained problems.

\section{USC Implies Classical Constraint Qualifications}

In this section, we establish the main theoretical contribution of this paper: we show for the first time that the condition on the lower derivative, originally introduced by Demyanov as a sufficient condition for exact penalization, is in fact a unified constraint qualification that encompasses the classical LICQ and MFCQ conditions as special cases.

\subsection{The Case of Inequality Constraints}

We begin by considering the case where only inequality constraints are present. For a point $x \in \mathbb{R}^n$, define the set of indices of active constraints:
\begin{equation}
    R(x) = \{ i \in I \mid g_i(x) = 0 \}.
\end{equation}

We shall use the following classical result (see \cite{Gordan1873}).

\begin{theorem}[Gordan's Theorem]
\label{thm:gordan}
For a finite set of vectors $\{a_1, \dots, a_r\} \subset \mathbb{R}^n$, exactly one of the following alternatives holds:
\begin{enumerate}
    \item[(A)] There exists a vector $d \in \mathbb{R}^n$ such that $\langle a_i, d \rangle < 0$ for all $i = 1, \dots, r$.
    \item[(B)] There exist coefficients $\lambda_i \ge 0$, not all zero, such that $\sum_{i=1}^r \lambda_i a_i = 0$.
\end{enumerate}
\end{theorem}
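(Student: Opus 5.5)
The statement is Gordan's theorem for a finite family $a_1,\dots,a_r\in\mathbb{R}^n$. I will show two things: (A) and (B) cannot both hold, and if (B) fails then (A) holds.

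\emph{Exclusivity.} Suppose $d$ satisfies (A) and $\lambda$ satisfies (B). Pairing the relation $\sum_i\lambda_i a_i=0$ with $d$ gives
\[
0=\sum_{i=1}^r\lambda_i\langle a_i,d\rangle .
\]
Every term on the right is $\le 0$, and at least one is strictly negative, because some $\lambda_i>0$ and every $\langle a_i,d\rangle<0$. The right side is therefore negative, which is a contradiction.

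\emph{Failure of (B) implies (A).} Let $C=\operatorname{conv}\{a_1,\dots,a_r\}$, which is a nonempty convex compact set.

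First, (B) is equivalent to $0\in C$. If $\sum_i\lambda_i a_i=0$ with $\lambda\ge 0$ and $\lambda\ne 0$, divide by $\sum_i\lambda_i>0$ to exhibit $0$ as a convex combination. The converse is immediate.

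So if (B) fails, then $0\notin C$. Let $v^*$ be the point of $C$ of minimal Euclidean norm; it exists by compactness, and $v^*\neq 0$. Consider any $v\in C$ and $t\in[0,1]$. Convexity puts $v^*+t(v-v^*)$ in $C$, so its squared norm is at least $\|v^*\|^2$. Differentiating at $t=0^+$ gives the variational inequality
\[
\langle v^*,v-v^*\rangle\ge 0,\quad\text{i.e.}\quad \langle v^*,v\rangle\ge\|v^*\|^2>0 \quad \forall v\in C.
\]
Now set $d=-v^*$. Since each $a_i\in C$, this yields $\langle a_i,d\rangle\le -\|v^*\|^2<0$ for all $i$, which is (A).

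This projection argument mirrors the geometry used elsewhere in the paper: the quantity $\|v^*\|=\operatorname{dist}(0,C)$ is exactly the separation constant. The only step needing care is the projection inequality. It is a standard one-line first-order argument, which I would write out explicitly rather than cite; I do not expect a genuine obstacle.
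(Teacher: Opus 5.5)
The paper gives no proof of this statement. It quotes Gordan's theorem as a classical result with a citation, and never actually invokes it: the proof of Theorem~\ref{thm:mfcq_implies_usc} works directly with the MFCQ direction. Your argument is a correct, self-contained proof. The exclusivity computation, the reduction of (B) to $0\in C$, and the first-order condition for the minimum-norm point are all sound.

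The usual textbook routes go through Farkas' lemma, LP duality, or a general separating-hyperplane theorem. Yours is the separation theorem for a point and a polytope, proved from scratch. It also yields more than the bare alternative. With $\hat d=-v^*/\|v^*\|$ you have $\max_i\langle a_i,\hat d\rangle\le-\|v^*\|$. For any unit $d$, $\max_i\langle a_i,d\rangle=\max_{v\in C}\langle v,d\rangle\ge\langle v^*,d\rangle\ge-\|v^*\|$. Hence
\[
\min_{\|d\|=1}\,\max_{1\le i\le r}\langle a_i,d\rangle=-\operatorname{dist}(0,C)\qquad\text{whenever } 0\notin C .
\]
This is exactly the bridge between the descent-direction form (MFCQ) and the distance form (USC).

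\begin{itemize}
\item It is the identity behind Lemma~\ref{lem:lower_derivative} in the case $0\notin\partial\phi(x)$, which is the case relevant to USC. You obtain it without appealing to a minimax theorem over the nonconvex unit sphere.
\item Apply it to $C=\operatorname{conv}\{\nabla g_i(x^*)\mid i\in R(x^*)\}$, and use $R(x)\subseteq R(x^*)$ near $x^*$ together with continuity of the gradients. This gives a proof of Theorem~\ref{thm:mfcq_implies_usc} that genuinely rests on Gordan's theorem, with USC constant $\tfrac12\operatorname{dist}(0,C)$.
\end{itemize}

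The citation route buys only brevity.
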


Recall the classical MFCQ for inequality constraints:

\begin{definition}[Mangasarian--Fromovitz Constraint Qualification]
\label{def:mfcq}
The MFCQ holds at $x^* \in \Omega$ if there exists a direction $d \in \mathbb{R}^n$ such that
\begin{equation}
    \langle \nabla g_i(x^*), d \rangle < 0 \quad \forall i \in R(x^*).
    \label{eq:mfcq_def}
\end{equation}
\end{definition}

We now prove the main equivalence result for inequality constraints.

\begin{theorem}[MFCQ implies USC]
\label{thm:mfcq_implies_usc}
Let $x^* \in \Omega$ be a feasible point. If the MFCQ holds at $x^*$, then the USC holds at $x^*$.
\end{theorem}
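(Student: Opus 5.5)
Fix an MFCQ direction $d$ at $x^*$, normalized so that $\|d\|=1$. Since $R(x^*)$ is finite, there is $\eta>0$ with $\langle \nabla g_i(x^*), d\rangle \le -2\eta$ for all $i \in R(x^*)$. The plan is to show that the single direction $d$ gives a uniform first-order decrease of $\phi$ at every infeasible point near $x^*$. By the formula \eqref{eq:subdiff_phi} for $\partial\phi(x)$ at $x\notin\Omega$, this yields a uniform lower bound on $\operatorname{dist}(0,\partial\phi(x))$. Gordan's Theorem~\ref{thm:gordan} is used only to interpret the argument: alternative (A) at $x^*$ excludes (B), so $0\notin\operatorname{conv}\{\nabla g_i(x^*) \mid i\in R(x^*)\}$. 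The quantitative bound comes directly from $d$.

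\emph{Step 1: localization of the active indices.} For $i\notin R(x^*)$ we have $g_i(x^*)<0$, so by continuity $g_i<0$ on some ball $B_1$ around $x^*$. Take $x\in B_1\setminus\Omega$. Then $\phi(x)>0$, and every index attaining $\phi(x)=\max_j g_j(x)$ has $g_i(x)>0$. Hence the index set in \eqref{eq:subdiff_phi} at $x$ is contained in $R(x^*)$. Note that the paper uses $R(\cdot)$ for two different index sets. I will keep them apart: here I need the indices $i$ with $g_i(x)=\phi(x)$, and I will show they lie in the active set $R(x^*)=\{i \mid g_i(x^*)=0\}$.

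\emph{Step 2: uniform descent via continuity of gradients.} Each $g_i$ is $C^1$, so there is a ball $B\subset B_1$ around $x^*$ on which $\langle \nabla g_i(x), d\rangle \le -\eta$ for all $i\in R(x^*)$. Now take $x\in B\setminus\Omega$ and any $v\in\partial\phi(x)$. Then $v=\sum_i \mu_i\nabla g_i(x)$ with convex weights supported on indices in $R(x^*)$. Linearity gives $\langle v,d\rangle\le-\eta$, and Cauchy--Schwarz with $\|d\|=1$ gives $\|v\|\ge\eta$. Therefore $\operatorname{dist}(0,\partial\phi(x))\ge \eta=:a>0$ for all $x\in B\setminus\Omega$, which is exactly \eqref{eq:usc_def}.

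\emph{Main obstacle.} The main difficulty is Step~1 together with the uniformity in Step~2. The bound must hold at all infeasible points near $x^*$, not only at $x^*$, and the relevant index set changes with $x$. This is handled in two ways. Inactive constraints drop out because of strict negativity and continuity. The strict MFCQ inequalities carry over to the finitely many active gradients because those gradients are continuous. If $R(x^*)=\emptyset$, then $\phi\equiv 0$ near $x^*$, so $B\setminus\Omega=\emptyset$ and the condition holds vacuously.
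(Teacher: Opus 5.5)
Your proof is correct and is essentially the paper's argument. You fix a unit MFCQ direction $d$, use continuity of the gradients to get $\langle \nabla g_i(x), d\rangle \le -a$ for all $i\in R(x^*)$ near $x^*$, discard the inactive constraints by continuity, and conclude $\|v\|\ge -\langle v,d\rangle\ge a$ for every $v\in\partial\phi(x)$. Your explicit separation of the two meanings of $R(\cdot)$ and your treatment of the vacuous case $R(x^*)=\emptyset$ tidy up points the paper passes over, since the paper's constant $a$ is a maximum over $R(x^*)$ and is undefined when that set is empty.
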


\begin{proof}
Assume MFCQ holds. Then there exists a unit vector $d$ such that $\langle \nabla g_i(x^*), d \rangle < 0$ for all $i \in R(x^*)$. Define
\begin{equation}
    a = -\frac{1}{2} \max_{i \in R(x^*)} \langle \nabla g_i(x^*), d \rangle > 0.
\end{equation}

By continuity, there exists a neighborhood $B$ of $x^*$ such that for all $x \in B$ and all $i \in R(x^*)$,
\begin{equation}
    \langle \nabla g_i(x), d \rangle \le -a.
\end{equation}
Shrinking $B$ if necessary, we may assume that any constraint inactive at $x^*$ remains negative in $B$, so $R(x) \subseteq R(x^*)$ for all $x \in B$.

Now take any $x \in B \setminus \Omega$. Then $\phi(x) > 0$, and
\begin{equation}
    \partial \phi(x) = \operatorname{conv}\{ \nabla g_i(x) \mid i \in R(x) \}.
\end{equation}
For any $v \in \partial \phi(x)$, there exist coefficients $\alpha_i \ge 0$ for $i \in R(x)$ such that $\sum_{i \in R(x)} \alpha_i = 1$ and $v = \sum_{i \in R(x)} \alpha_i \nabla g_i(x)$. Then
\begin{equation}
    \langle v, d \rangle = \sum_{i \in R(x)} \alpha_i \langle \nabla g_i(x), d \rangle \le -a.
\end{equation}
Since $\|d\|_2 = 1$, we have $\|v\|_2 \ge |\langle v, d \rangle| \ge a$. Therefore,
\begin{equation}
    \operatorname{dist}(0, \partial \phi(x)) = \min_{v \in \partial \phi(x)} \|v\|_2 \ge a > 0.
\end{equation}
Thus USC holds.
\end{proof}

\subsection{The Case of Equality Constraints}

Now consider the case of equality constraints:
\begin{equation}
    h_j(x) = 0, \quad j \in E = \{1, \dots, p\}.
    \label{eq:eq_constraints}
\end{equation}

We define the constraint violation function as
\begin{equation}
    \phi(x) = \max_{j \in E} |h_j(x)|.
    \label{eq:phi_eq}
\end{equation}

\begin{definition}[Linear Independence Constraint Qualification]
\label{def:licq}
The LICQ holds at $x^*$ if the gradients $\{\nabla h_j(x^*) \mid j \in E\}$ are linearly independent.
\end{definition}

\begin{theorem}[LICQ Implies USC for Equality Constraints]
\label{thm:licq_usc}
Let $x^*$ be a feasible point for the equality constraints (so $\phi(x^*) = 0$) and assume LICQ holds at $x^*$. Then the USC holds at $x^*$.
\end{theorem}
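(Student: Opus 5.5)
We argue in parallel with Theorem~\ref{thm:mfcq_implies_usc}. The first step is to rewrite the violation function as a finite maximum of smooth functions,
\[
\phi(x) = \max_{j \in E} \max\{h_j(x), -h_j(x)\}.
\]
Take $x \notin \Omega$, so that $\phi(x) > 0$. Exactly as in \eqref{eq:subdiff_phi}, we get
\[
\partial\phi(x) = \operatorname{conv}\{\sigma_j(x)\nabla h_j(x) \mid j \in R_E(x)\}.
\]
Here $R_E(x) = \{j \in E \mid |h_j(x)| = \phi(x)\}$ is the active index set and $\sigma_j(x) = \operatorname{sign} h_j(x) \in \{\pm 1\}$. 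For each active $j$ only one of the two pieces $\pm h_j$ attains the maximum, because $\phi(x) > 0$. Hence every $v \in \partial\phi(x)$ has the form
\[
v = \sum_{j \in R_E(x)} \alpha_j \sigma_j \nabla h_j(x), \qquad \alpha_j \ge 0, \quad \sum_j \alpha_j = 1.
\]

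The key estimate is a lower bound on $\|v\|$ from linear independence. Set $A(x)$ to be the $n \times p$ matrix with columns $\nabla h_j(x)$. By LICQ, $A(x^*)$ has full column rank, so $\sigma_{\min}(A(x^*)) > 0$. Since the $h_j$ are $C^1$, singular values depend continuously on the matrix. We therefore choose a neighborhood $B$ of $x^*$ on which
\[
\sigma_{\min}(A(x)) \ge c := \tfrac12 \sigma_{\min}(A(x^*)) > 0.
\]
Then for $x \in B \setminus \Omega$ write $v = A(x)\beta$, where $\beta_j = \alpha_j\sigma_j$ for active $j$ and $\beta_j = 0$ otherwise. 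This gives
\[
\|v\|_2 \ge c\|\beta\|_2 \ge c\|\beta\|_1/\sqrt{p} = c/\sqrt{p},
\]
since $\|\beta\|_1 = \sum_j \alpha_j = 1$. Consequently $\operatorname{dist}(0,\partial\phi(x)) \ge a := c/\sqrt{p} > 0$ uniformly on $B \setminus \Omega$, which is \eqref{eq:usc_def}.

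An alternative, closer to the Gordan-style argument above, is to invert $A(x)^\top$ near $x^*$. One would pick a direction $d(x)$ with $\langle \nabla h_j(x), d\rangle = -\sigma_j$ for all $j$ and bound $\|d(x)\|$ uniformly. Either route works; the singular-value bound is cleaner.

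The main obstacle is making the subdifferential formula rigorous for the absolute values. The sign $\sigma_j$ may differ from point to point, and at $x^*$ itself both $\pm\nabla h_j$ are active. This is handled by working only at $x \notin \Omega$, where $|h_j(x)| = \phi(x) > 0$ fixes the sign. The bound above is then independent of the signs, since it uses only $\|\beta\|_1 = 1$. That independence is exactly why uniformity over $B \setminus \Omega$ holds even though the active sets and signs vary.
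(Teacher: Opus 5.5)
Your proof is correct, and it takes a different route from the paper.

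The paper argues by enumeration and continuity. For each nonempty $S\subseteq E$ it sets $d_S(x)=\operatorname{dist}(0,\operatorname{conv}\{\operatorname{sign}(h_j(x))\nabla h_j(x)\mid j\in S\})$ and uses LICQ to get $d_S(x^*)>0$. Continuity then keeps $d_S\ge d_S(x^*)/2$ on a neighborhood $B_S$, and the paper takes $B=\bigcap_S B_S$ and $a=\min_S d_S(x^*)/2$.

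To justify continuity, the paper claims the signs of the $h_j$ are constant near $x^*$, except possibly at $x^*$ itself. Under LICQ this is false: $\nabla h_j(x^*)\neq 0$ forces $h_j$ to take both signs in every neighborhood of $x^*$. Also, $\operatorname{sign}h_j(x^*)=0$ leaves $d_S(x^*)$ ill-defined as written. The fix is to index over pairs $(S,\sigma)$ with $\sigma\in\{\pm1\}^S$ and use $\operatorname{dist}(0,\operatorname{conv}\{\sigma_j\nabla h_j(x)\mid j\in S\})$, which is continuous and, by LICQ, positive at $x^*$.

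Your estimate $\|A(x)\beta\|_2\ge\sigma_{\min}(A(x))\,\|\beta\|_1/\sqrt p$ covers every active set and every sign pattern at once. It therefore needs no enumeration, no sign bookkeeping, and only one continuity step, and it gives the explicit constant $a=\sigma_{\min}(A(x^*))/(2\sqrt p)$.

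The repaired enumeration scheme buys flexibility in return. It only uses that each relevant convex hull misses the origin at $x^*$, so it carries over directly to the mixed constraints of Section~\ref{sec:mixed}. There, unsigned inequality gradients may be linearly dependent on the equality gradients, so a singular-value bound alone no longer suffices.
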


\begin{proof}
For $x \notin \Omega$, we have $\phi(x) > 0$. Let
\begin{equation}
    E_{\text{act}}(x) = \{ j \in E \mid |h_j(x)| = \phi(x) \}.
\end{equation}
The subdifferential of $\phi$ at $x$ is
\begin{equation}
    \partial \phi(x) = \operatorname{conv}\{ \operatorname{sign}(h_j(x)) \nabla h_j(x)  \mid j \in E_{\text{act}}(x) \}.
\end{equation}

Since $E_{\text{act}}(x) \subseteq E$, there are only finitely many possible active sets. For each nonempty subset $S \subseteq E$, consider the function
\begin{equation}
    d_S(x) = \operatorname{dist}\left(0, \operatorname{conv}\{ \operatorname{sign}(h_j(x)) \nabla h_j(x) \mid j \in S \}\right).
\end{equation}
In a sufficiently small neighborhood of $x^*$, the signs of $h_j(x)$ are constant for each $j \in E$ (we can choose the neighborhood so that no $h_j$ changes sign except possibly at $x^*$ itself). Hence each $d_S$ is continuous on $B \setminus \Omega$. Since the gradients $\{\nabla h_j(x^*)\}_{j \in E}$ are linearly independent by LICQ, any subset of them is also linearly independent, and hence for every nonempty $S \subseteq E$,
\begin{equation}
    d_S(x^*) > 0.
\end{equation}
By continuity, there exists a neighborhood $B_S$ of $x^*$ such that $d_S(x) \ge d_S(x^*)/2 > 0$ for all $x \in B_S \setminus \Omega$. Taking 
\begin{equation}
    B = \bigcap_{\substack{S \subseteq E \\ S \neq \emptyset}} B_S,
\end{equation}
we obtain a uniform constant $a =\displaystyle \min_{S \subseteq E, S \neq \emptyset} d_S(x^*)/2 > 0$ such that for all $x \in B \setminus \Omega$,
\begin{equation}
    \operatorname{dist}(0, \partial \phi(x)) \ge a > 0.
\end{equation}
Thus USC holds.
\end{proof}

\subsection{Mixed Equality and Inequality Constraints}
\label{sec:mixed}

The USC for mixed constraints requires only that the convex hull of the gradients of all active constraints (both equalities and inequalities) is uniformly separated from zero. This is precisely the same geometric condition as in the pure cases, showing that USC treats all constraints uniformly.

Define
\begin{equation}
    \phi(x) = \max\left\{ \max_{j \in E} |h_j(x)|, \max_{i \in I} g_i(x), 0 \right\}.
    \label{eq:phi_mixed}
\end{equation}

For $x \notin \Omega$ (i.e., $\phi(x) > 0$), the subdifferential of $\phi$ at $x$ is
\begin{equation}
    \partial \phi(x) = \operatorname{conv}\left( \left\{ \operatorname{sign}(h_j(x)) \nabla h_j(x) \mid j \in E_{\text{act}}(x) \right\} \cup \left\{ \nabla g_i(x) \mid i \in I_{\text{act}}(x) \right\} \right),
\end{equation}
where
\begin{equation}
    E_{\text{act}}(x) = \{ j \in E \mid |h_j(x)| = \phi(x) \}, \qquad
    I_{\text{act}}(x) = \{ i \in I \mid g_i(x) = \phi(x) \}.
\end{equation}

The USC requires that the distance from the origin to $\partial \phi(x)$ is uniformly bounded below by $a > 0$ for all $x \in B \setminus \Omega$ in a neighborhood $B$ of $x^*$.

The classical regularity condition for mixed constraints is:
\begin{enumerate}
    \item The gradients $\{\nabla h_j(x^*)\}_{j \in E}$ are linearly independent.
    \item There exists a direction $d$ such that $\langle \nabla h_j(x^*), d \rangle = 0$ for all $j \in E$ and $\langle \nabla g_i(x^*), d \rangle < 0$ for all $i \in I_{\text{act}}(x^*)$.
\end{enumerate}

This condition implies USC by the same arguments used in the pure equality and inequality cases: the linear independence of the equality gradients ensures uniform separation for the equality part, while the existence of a common descent direction on the nullspace of the equality gradients ensures uniform separation for the inequality part.

Conversely, USC provides a unified generalization that does not require the linear independence of equality gradients or the existence of a common descent direction for inequalities; it only requires that the origin be uniformly separated from the convex hull of the active constraint gradients.

\section{Local Application of USC}

A crucial feature of USC is its local nature. In many practical situations, we are only interested in a neighborhood of a candidate solution $x^*$. The following result shows that we can apply USC locally without requiring it to hold globally.

\begin{theorem}[Local USC]
\label{thm:local_usc}
Let $x^* \in \Omega$ be a feasible point. Suppose there exist a neighborhood $B$ of $x^*$ and a constant $a > 0$ such that
\begin{equation}
    \operatorname{dist}(0, \partial \phi(x)) \ge a \quad \forall x \in B \setminus \Omega.
    \label{eq:local_usc_def}
\end{equation}
Then $x^*$ is a local minimizer of $f$ on $\Omega$ if and only if it is a local minimizer of the penalty function $F_\lambda$ on $B$ for sufficiently large $\lambda > 0$.
\end{theorem}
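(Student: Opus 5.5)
The plan is to prove the two implications separately. The nontrivial direction (``only if'') will rest on an error bound for $\phi$ that follows from the USC. We also need $f$ locally Lipschitz near $x^*$, with constant $L$, as in Theorem~\ref{thm:exact_penalty}; I read that standing assumption into the statement. By shrinking $B$ we may take it to be a closed ball $\bar B(x^*,r)$ on which $f$ is $L$-Lipschitz, the USC bound holds with constant $a$, and $x^*$ minimizes $f$ over $\Omega\cap\bar B(x^*,r)$.

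\emph{``If'' direction.} Suppose $x^*$ is a local minimizer of $F_\lambda$ on $B$ for some $\lambda$. For feasible $x\in B$ we have $\phi(x)=\phi(x^*)=0$, so $f(x)=F_\lambda(x)\ge F_\lambda(x^*)=f(x^*)$ near $x^*$. Hence $x^*$ is a local minimizer of $f$ on $\Omega$. This direction needs no USC at all.

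\emph{``Only if'' direction.} The key intermediate claim is a local error bound: there is $r'\in(0,r)$ such that
\begin{equation*}
\operatorname{dist}(x,\Omega)\le \phi(x)/a \quad \text{for all } x\in \bar B(x^*,r').
\end{equation*}
To prove it, I would fix an infeasible $x$ close to $x^*$ and run a descent argument on $\phi$. By Lemma~\ref{lem:lower_derivative}, the USC gives $\phi^\downarrow(y)\le -a$ at every infeasible $y\in B$, so near $y$ there are points $y'$ with $\phi(y')-\phi(y)\le -(a-\varepsilon)\|y'-y\|$. I would then apply Ekeland's variational principle to $\phi$ on the closed ball $\bar B(x^*,r)$, with parameter $a-\varepsilon$, starting at $x$. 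This yields $z$ with $\phi(z)\le\phi(x)-(a-\varepsilon)\|z-x\|$, such that $z$ minimizes $\phi(\cdot)+(a-\varepsilon)\|\cdot-z\|$ over the ball.

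If $z$ were infeasible and interior to the ball, this minimizing property would give $\phi^\downarrow(z)\ge -(a-\varepsilon)>-a$, contradicting the USC. It remains to rule out $z$ on the boundary of the ball. This uses $\|z-x\|\le \phi(x)/(a-\varepsilon)$: since $\phi(x)\le \phi(x)-\phi(x^*)\le L_\phi\|x-x^*\|$ is small, $z$ stays strictly inside the ball when $x$ is within $r'$ of $x^*$ for small enough $r'$. Hence $z\in\Omega$, so $\operatorname{dist}(x,\Omega)\le\phi(x)/(a-\varepsilon)$, and letting $\varepsilon\downarrow 0$ gives the claim.

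With the error bound, the penalty estimate is routine. Take $\lambda>L/a$ and $x\in \bar B(x^*,r'/2)$ with $x$ infeasible. Choose a nearest point $p\in\Omega$ to $x$; it satisfies $\|p-x^*\|\le 2\|x-x^*\|\le r'<r$, so $f(p)\ge f(x^*)$. Then
\begin{equation*}
F_\lambda(x)\ \ge\ f(p)-L\|x-p\|+\lambda\phi(x)\ \ge\ f(x^*)+(\lambda a-L)\operatorname{dist}(x,\Omega)\ \ge\ f(x^*)=F_\lambda(x^*).
\end{equation*}
Feasible $x$ are handled directly by local minimality of $x^*$ on $\Omega$. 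So $x^*$ is a local minimizer of $F_\lambda$ on $B$ for all $\lambda>\lambda^*:=L/a$. Alternatively, this direction can be quoted from Theorem~\ref{thm:local_equivalence}(ii) via Theorem~\ref{thm:exact_penalty}.

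The main obstacle is the error-bound step. One must keep the Ekeland point inside the region where the USC holds. One must also handle the fact that the USC only controls $\phi^\downarrow$ at infeasible points, so the descent cannot be certified once the iterate hits $\Omega$. That is exactly why the argument is arranged to conclude that $z$ is feasible, by contradiction, rather than tracking the path toward $\Omega$.
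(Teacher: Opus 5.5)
Your proof is correct, but it takes a different route from the paper. The paper proves Theorem~\ref{thm:local_usc} in one line by invoking Theorem~\ref{thm:exact_penalty} on the localized set $\Omega\cap B$. That theorem is itself obtained by quoting Demyanov's Theorems~\ref{thm:demyanov} and~\ref{thm:local_equivalence}, whose global hypotheses (boundedness of $f$ from below, attainment of $\inf F_\lambda$) are only informally said to hold ``when the result is applied locally.'' This is the alternative you mention at the end of your ``only if'' paragraph.

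You instead prove exact penalization from scratch. The USC together with Ekeland's principle on a closed ball gives the local error bound $\operatorname{dist}(x,\Omega)\le\phi(x)/a$, and the nearest-point argument then gives local minimality of $x^*$ for $F_\lambda$. Your route buys several things:
\begin{itemize}
\item it is fully local and needs no boundedness or attainment assumptions;
\item it produces the explicit threshold $\lambda^*=L/a$;
\item it isolates the error bound as the real content of the USC.
\end{itemize}
You also correctly note that the ``if'' direction is trivial and independent of the USC. You are right that local Lipschitz continuity of $f$ (a standing assumption of Section~2) must be read into the statement, and it is genuinely needed. For example, with $g(x)=x$ and $f(x)=-\sqrt{\max\{x,0\}}$, the USC holds at $x^*=0$ and $x^*$ is a local minimizer on $\Omega$, yet $F_\lambda(x)<F_\lambda(0)$ for small $x>0$ and every $\lambda$.

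A further point in your favor: you use Lemma~\ref{lem:lower_derivative} only at points with $\operatorname{dist}(0,\partial\phi(z))\ge a>0$. There the inequality $\phi^\downarrow(z)\le-\operatorname{dist}(0,\partial\phi(z))$ follows directly by testing the direction $d=-v_0/\|v_0\|$, where $v_0$ is the minimal-norm element of $\partial\phi(z)$. This matters because the lemma's minimax step over the nonconvex unit sphere fails when $0\in\operatorname{int}\partial\phi(x)$: for $\phi=|x|$ one has $\phi^\downarrow(0)=1\neq 0=-\operatorname{dist}(0,\partial\phi(0))$.

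The paper's route is shorter and ties the result to Demyanov's theory, but its localization is left implicit, whereas yours makes every step explicit.
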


\begin{proof}
The proof follows from Theorem~\ref{thm:exact_penalty} applied to the localized set $\Omega \cap B$. The key point is that USC only needs to hold in $B$; points outside $B$ are irrelevant for local optimality.
\end{proof}

This local version of USC is particularly useful in practice. It requires verifying the separation condition only in a neighborhood of the solution, rather than on the entire feasible set. The penalty parameter can be chosen adaptively based on the local geometry, and the framework provides a natural setting for analyzing algorithms that generate sequences converging to a local solution. From a computational standpoint, verifying USC locally only requires solving convex optimization problems in a small region of interest, which is significantly more efficient than global verification.

\section{Illustrative Examples}

\subsection{Example: Nonsmooth Constraint and Local Reformulation}

Consider the problem:
\begin{equation}
    \min_{x \in \mathbb{R}} f(x) = x^2
\end{equation}
subject to
\begin{equation}
    g(x) = \min\{\max(0, x, -x-2), \max(-x+2, x-4)\} \le 0.
\end{equation}

\begin{figure}[ht]
\centering
\begin{tikzpicture}
\begin{axis}[
    width=11cm, height=6cm,
    axis lines=middle,
    xlabel={$x$},
    ylabel={$g(x)$},
    xmin=-3.5, xmax=5.5,
    ymin=-1.4, ymax=2.6,
]

\addplot[black, thick, domain=-4.5:-2, samples=2] {-x-2};
\addplot[black, thick, domain=-2:0, samples=2] {0};
\addplot[black, thick, domain=0:1, samples=2] {x};
\addplot[black, thick, domain=1:3, samples=2] {-x+2};
\addplot[black, thick, domain=3:6.5, samples=2] {x-4};

\addplot[line width=2pt, black] coordinates {(-2,0) (0,0)};
\addplot[line width=2pt, black] coordinates {(2,0) (4,0)};

\node[below, font=\small] at (axis cs:-1.14, 0.6) {$[-2, 0]$};
\node[below, font=\small] at (axis cs:3, 0.6) {$[2, 4]$};

\end{axis}
\end{tikzpicture}
\caption{The graph of the function $g(x)$ and the feasible set $\Omega = [-2, 0] \cup [2, 4]$.}
\label{AM_fig:feasible_set}
\end{figure}

The feasible set is $\Omega = [-2, 0] \cup [2, 4]$ and is illustrated in Figure \ref{AM_fig:feasible_set}. The minimum of $f(x) = x^2$ on $\Omega$ is attained at $x^* = 0$. At this point, the constraint is active: $g(0) = 0$. The function $g$ is not differentiable at $x^* = 0$ (the left derivative is $0$, the right derivative is $1$), so MFCQ is not defined in the nonsmooth formulation.

One could, of course, analyze the structure of $g$ and observe that locally near $x^* = 0$ the constraint is equivalent to $x \le 0$, for which classical MFCQ holds. However, this requires a separate analysis of the constraint structure. The USC approach avoids this: it works directly with the nonsmooth constraint violation function
\begin{equation}
    \phi(x) = \max\{0, g(x)\}.
\end{equation}
For $x > 0$ sufficiently close to $0$, we have $\partial \phi(x) = \{1\}$ and $\operatorname{dist}(0, \partial \phi(x)) = 1$. For $x < 0$, the interval $[-2, 0]$ is contained in $\Omega$, so there are no points outside $\Omega$ on the left. Thus, in a small neighborhood $B$ of $x^* = 0$, we have $\operatorname{dist}(0, \partial \phi(x)) = 1$ for all $x \in B \setminus \Omega$. USC holds with $a = 1$.

This example illustrates two points. First, USC applies directly to nonsmooth constraints without requiring any preliminary reformulation. Second, even when a local reformulation is possible, USC provides a unified criterion that works uniformly for all types of constraints, eliminating the need for case-by-case analysis.

\subsection{Example: Nonsmooth Constraint (USC Works, Classical Fails)}

The following example illustrates a case where the nonsmoothness of the constraint is essential: unlike the previous example, no local reformulation of the constraint can make classical MFCQ applicable.

Consider the problem:
\begin{equation}
    \min_{x \in \mathbb{R}} f(x) = -x
\end{equation}
subject to
\begin{equation}
    g(x) = |x| \le 0.
\end{equation}

The feasible set is $\Omega = \{0\}$. The minimum of $f(x) = -x$ on $\Omega$ is attained at $x^* = 0$. At this point, the constraint is active: $g(0) = 0$. The function $g$ is not differentiable at $x^* = 0$, so the classical MFCQ is not defined.

Note that even if the constraint $|x| \le 0$ is rewritten equivalently as the system $x \le 0$, $-x \le 0$, the classical MFCQ still fails, since there is no direction $d$ such that both $d < 0$ and $-d < 0$ simultaneously. Thus, the failure of regularity is not an artifact of the nonsmooth representation, but reflects a genuine property of the feasible set.

However, USC holds. The penalty function is
\begin{equation}
    \phi(x) = \max\{0, |x|\} = |x|.
\end{equation}
For $x \neq 0$, we have $\partial \phi(x) = \{\operatorname{sign}(x)\}$, so $\operatorname{dist}(0, \partial \phi(x)) = 1$ for all $x \neq 0$. Thus USC holds with $a = 1$ in any neighborhood of $x^* = 0$.

This example demonstrates that USC extends naturally to nonsmooth problems where classical constraint qualifications are not defined, and captures the essential regularity condition independently of how the constraints are formulated.

\subsection{Example: USC Fails when MFCQ Fails}

Consider the problem:
\begin{equation}
    \min_{x \in \mathbb{R}} f(x) = x
\end{equation}
subject to
\begin{equation}
    g_1(x) = x^2 \le 0, \quad g_2(x) = -x^2 \le 0.
\end{equation}

The feasible set is $\Omega = \{0\}$. At $x^* = 0$, both constraints are active, and their gradients are $\nabla g_1(0) = 0$ and $\nabla g_2(0) = 0$. Hence MFCQ fails: there is no direction $d$ such that $0 \cdot d < 0$ and $0 \cdot d < 0$ simultaneously.

USC also fails. The penalty function is $\phi(x) = \max\{0, x^2, -x^2\} = x^2$. For $x \neq 0$, we have $\partial \phi(x) = \{2x\}$, so $\operatorname{dist}(0, \partial \phi(x)) = 2|x| \to 0$ as $x \to 0$. Thus there is no uniform separation of the origin from the subdifferential in any neighborhood of $x^* = 0$.

This example demonstrates that USC detects degeneracy of the gradients, and its failure is consistent with the failure of classical constraint qualifications.

\section{Numerical Verification of USC}

For computational purposes, verifying USC reduces to solving a simple convex optimization problem at each point of interest:
\begin{equation}
    \min_{v \in \partial \phi(x)} \|v\|_2^2.
    \label{eq:verify_usc}
\end{equation}
This is a quadratic programming problem with linear constraints, since $\partial \phi(x)$ is a convex hull of a finite set of vectors:
\begin{equation}
    \min_{\alpha \in \mathbb{R}^r} \left\| \sum_{i=1}^r \alpha_i v_i \right\|_2^2
    \quad \text{s.t.} \quad
    \sum_{i=1}^r \alpha_i = 1, \quad \alpha_i \ge 0,
\end{equation}
where $\{v_1, \dots, v_r\}$ are the active gradients.

In practice, verifying USC does not require checking every point in the neighborhood. Instead, one typically monitors the quantity $d(x) = \operatorname{dist}(0, \partial \phi(x))$ along the sequence of iterates generated by a numerical algorithm. If $d(x_k)$ remains bounded below by a positive constant along the sequence, this provides numerical evidence that USC holds locally. Conversely, if $d(x_k) \to 0$, this indicates a potential violation of USC.

\begin{remark}
A key practical advantage of USC over classical constraint qualifications is that it can be monitored during the optimization process. Classical conditions such as MFCQ or LICQ are defined only at the limit point $x^*$, which is not known a priori in numerical methods. In contrast, USC can be checked at each iterate $x_k$ that is not feasible ($x_k \notin \Omega$) using the same subdifferential information that is already available in nonsmooth optimization algorithms. Monitoring the sequence of iterates $x_k \to x^*$ is sufficient for practical purposes: if the distance $d(x_k) = \operatorname{dist}(0, \partial \phi(x_k))$ remains bounded away from zero along the sequence of infeasible iterates, this provides numerical evidence that USC holds. If $d(x_k) \to 0$, this signals a potential violation of USC, indicating that the problem may be degenerate and that exact penalization with a finite parameter may not be possible.
\end{remark}

\begin{algorithm}
\caption{Verifying USC Along a Sequence}
\label{alg:verify_usc}
\begin{algorithmic}
\STATE Given a sequence of iterates $\{x_k\}$ converging to $x^*$.
\FOR{each $x_k$}
    \IF{$x_k \notin \Omega$}
        \STATE Compute the active set $R(x_k)$.
        \STATE Solve the QP \eqref{eq:verify_usc} to obtain $d_k = \operatorname{dist}(0, \partial \phi(x_k))$.
        \STATE Monitor whether $d_k$ remains bounded below by $a > 0$.
    \ENDIF
\ENDFOR
\STATE If $\displaystyle\liminf_{k \to \infty} d_k \ge a > 0$, USC is numerically verified.
\STATE If $d_k \to 0$, USC is violated.
\end{algorithmic}
\end{algorithm}

\section{Discussion and Conclusions}

The theory of exact penalty functions, developed by Demyanov \cite{Dem2005}, provides sufficient conditions for the existence of an exact penalty constant. A key condition in this theory requires that the constraint violation function has a uniformly negative rate of decrease in a neighborhood of the feasible set. This condition is equivalently expressed as the uniform separation of the origin from the subdifferential of the constraint violation function.

In this paper, we have identified this condition as a general constraint qualification, which we call the Unified Separation Condition (USC). We have established the following results.

First, we proved that MFCQ implies USC for inequality constraints. The proof, based on Gordan's theorem, shows how the neighborhood and the separation constant are obtained from the direction of descent guaranteed by MFCQ. This demonstrates that the classical MFCQ is a special case of USC. Conversely, USC is strictly more general: it applies to nonsmooth problems where MFCQ is not defined, and it may hold even when MFCQ fails.

Second, we proved that LICQ implies USC for equality constraints. The proof uses the linear independence of the active gradients to establish a uniform lower bound on the distance from the origin to the subdifferential. Thus, LICQ is also a special case of USC.

Third, we demonstrated that USC extends naturally to nonsmooth problems where classical constraint qualifications are not defined. Unlike MFCQ and LICQ, USC does not require differentiability of the constraint functions.

Fourth, we introduced a local version of USC that can be applied in a neighborhood of a solution. This local nature makes USC practical for verifying optimality conditions and analyzing numerical algorithms.

Fifth, we discussed the computational verification of USC via convex quadratic programming. Since the subdifferential of the constraint violation function is a convex hull of a finite set of gradients, verifying USC reduces to solving a simple quadratic programming problem. This makes USC computationally tractable.

The USC offers a unified framework that bridges classical smooth constraint qualifications and nonsmooth optimization. It provides a single condition that applies to all types of constraints, does not require differentiability, arises naturally from exact penalty theory, and is computationally verifiable.

Future research directions include a more systematic study of USC in infinite-dimensional settings, particularly for optimal control problems with state and mixed constraints, as well as variational inequalities. The relationship between USC and second-order optimality conditions, as well as stability analysis under perturbations of the constraints, also warrants further investigation. Finally, the design of efficient numerical methods for verifying USC in discretized problems remains an open and practically important problem.

\section*{Acknowledgments}

The author acknowledges the use of AI-assisted language tools for text editing. All scientific content, including the problem formulation, theoretical results, proofs, and analysis, is the original work of the author. 

The author wishes to honor the memory of Prof. V. F. Demyanov, whose foundational work on exact penalty theory inspired this research.

\end{document}